\theoremstyle{plain}
\newtheorem{theorem}{Theorem}[section]
\newtheorem{corollary}[theorem]{Corollary}
\theoremstyle{definition}
\theoremstyle{remark}
\newtheorem{example}{Example}[section]
\DeclareMathOperator{\arsinh}{arsinh}
\DeclareMathOperator{\artanh}{artanh} 
\DeclareMathOperator{\sgn}{sgn} 
\begin{document}
\title{Ky Fan inequalities for bivariate means}

\author[A. Witkowski]{Alfred Witkowski}
\email{alfred.witkowski@utp.edu.pl}
\subjclass[2010]{}
\keywords{Ky Fan inequality, mean, homogeneity}
\date{\today}

\begin{abstract}
In this note we give sufficient conditions for  bivariate, homogeneous, symmetric means $M$ and $N$ to satisfy Ky Fan inequalities
$$\frac{M}{M'}<\frac{N}{N'}\qquad\text{ and }\quad \frac{1}{M}-\frac{1}{M'}<\frac{1}{N}-\frac{1}{N'}.$$
\end{abstract}

\keywords{Ky Fan inequality, mean, homogeneity}
\subjclass{26D60, 26E20}

\maketitle

\section{Introduction}
Denote by $\mathsf{G}$ and $\mathsf{A}$ the geometric and arithmetic means of nonnegative numbers $x_i,\ i=1,\dots,n$
$$\mathsf{G}(x_1,\dots,x_n)=\prod_{i=1}^n x_i^{1/n},\quad \mathsf{A}(x_1,\dots,x_n)=\frac{1}{n}\sum_{i=1}^n x_i.$$

The classical Ky Fan inequality states that if $0<x_i\leq 1/2$, then
$$\frac{\mathsf{G}(x_1,\dots,x_n)}{\mathsf{G}(1-x_1,\dots,1-x_n)}\leq \frac{\mathsf{A}(x_1,\dots,x_n)}{\mathsf{A}(1-x_1,\dots,1-x_n)}.$$

This result has been published in 1961 and generalized in many directions. For more information see \cite{Alzer1995}.

This note is inspired by the result of Neuman and S\'andor \cite{NS2003Fa} where we find the following sequence of inequalities 
\begin{equation}
\frac{\mathsf{G}}{\mathsf{G'}}\leq \frac{\mathsf{L}}{\mathsf{L'}}\leq \frac{\mathsf{P}}{\mathsf{P'}}\leq \frac{\mathsf{A}}{\mathsf{A'}}\leq \frac{\mathsf{NS}}{\mathsf{NS'}}\leq \frac{\mathsf{T}}{\mathsf{T'}},
\label{eq:chain1}
\end{equation}
(here ${\mathsf{L}}, {\mathsf{P}}, {\mathsf{NS}}, {\mathsf{T}}$ stand for the logarithmic, first Seiffert, Neuman-S\'andor and second Seiffert means of arguments $0<x,y\leq 1/2$, and prime denotes the same mean with arguments $1-x$ and $1-y$).

The aim of this note is to establish the conditions for two symmetric, homogeneous means $M, N$ under which  the Ky Fan inequality
$$\frac{M(x,y)}{M(1-x,1-y)}\leq \frac{N(x,y)}{N(1-x,1-y)} $$
holds.
\section{Definition and notation}
A function $M:\mathbb{R}_+^2\to\mathbb{R}$ is called a \textit{mean} if for all $x,y>0$ holds
$$\min(x,y)\leq M(x,y) \leq \max(x,y).$$
Mean is \textit{symmetric} if for all $M(x,y)=M(y,x)$ for all $x,y>0$ and \textit{homogeneous} if for all $\lambda>0$ 
$$M(\lambda x,\lambda y)=\lambda M(x,y).$$
A function $m:(0,1)\to\mathbb{R}$ satisfying
$$\frac{z}{1+z}\leq m(z)\leq \frac{z}{1-z}$$
is called a \textit{Seiffert function}.

Let us recall a result from \cite{Witkowski}
\begin{theorem}\label{thm:repr}
The formula	
$$M(x,y)=\frac{|x-y|}{2m\left(\frac{|x-y|}{x+y}\right)}$$
establishes a one-to-one correspondence between the set of symmetric, homogeneous means and the set of Seiffert functions.
\end{theorem}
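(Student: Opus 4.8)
The plan is to produce the inverse map explicitly and then check that the two maps are mutually inverse, pushing every computation onto a single one‑parameter slice via homogeneity. For $x\neq y$ I would parametrize the unordered pair $\{x,y\}$ by $s=x+y>0$ and $z=|x-y|/(x+y)\in(0,1)$; solving back gives $\{x,y\}=\tfrac{s}{2}\{1+z,1-z\}$, since $\tfrac{s}{2}(1+z)=\max(x,y)$ and $\tfrac{s}{2}(1-z)=\min(x,y)$. Consequently, for any symmetric homogeneous mean $M$ one has $M(x,y)=\tfrac{s}{2}M(1+z,1-z)$, and likewise $\tfrac{|x-y|}{2m(z)}=\tfrac{s}{2}\cdot\tfrac{z}{m(z)}$. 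So both sides of the claimed formula are determined by their restriction to the slice $\{(1+z,1-z):z\in(0,1)\}$, where the formula reads simply $M(1+z,1-z)=z/m(z)$.

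\emph{From a Seiffert function to a mean.} Given a Seiffert function $m$, define $M$ by the displayed formula for $x\neq y$ and $M(x,x)=x$. Symmetry is immediate because the right‑hand side depends only on $|x-y|$ and $x+y$, and homogeneity follows from the slice computation above (the factor $s/2$ scales linearly, $z$ is scale‑invariant). For the mean inequality it suffices by the slice reduction to verify $1-z\le z/m(z)\le 1+z$ on $(0,1)$; the defining bounds $\tfrac{z}{1+z}\le m(z)\le\tfrac{z}{1-z}$ (which in particular force $m(z)>0$) are exactly equivalent to these two inequalities, so $M$ is a symmetric homogeneous mean.

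\emph{From a mean to a Seiffert function, and inversion.} Given a symmetric homogeneous mean $M$, set $m(z):=z/M(1+z,1-z)$ for $z\in(0,1)$; this is well defined since $M(1+z,1-z)\ge 1-z>0$, and from $1-z\le M(1+z,1-z)\le 1+z$ we get $\tfrac{z}{1+z}\le m(z)\le\tfrac{z}{1-z}$, so $m$ is a Seiffert function. The two constructions are mutually inverse: starting from $m$, building $M$, and re‑extracting $\widetilde m(z)=z/M(1+z,1-z)=z/(z/m(z))=m(z)$; and starting from $M$, building $m$, then reconstructing $\widetilde M(x,y)=\tfrac{|x-y|}{2m(z)}=\tfrac{x+y}{2}M(1+z,1-z)=M(x,y)$ by homogeneity for $x\neq y$, with $\widetilde M(x,x)=x=M(x,x)$. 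This gives the asserted bijection.

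The only point requiring genuine care is the diagonal $x=y$, where the formula is a $0/0$ expression: I would either adopt the convention $M(x,x)=x$ (which is forced anyway by $\min(x,x)=\max(x,x)=x$), or, if one wants continuity, observe that the Seiffert bounds give $m(z)\to 0$ with $m(z)/z\to 1$ as $z\to 0^+$, so $\tfrac{|x-y|}{2m(|x-y|/(x+y))}\to\tfrac{x+y}{2}$ as $y\to x$. Beyond that, I expect no real obstacle — everything is a pointwise algebraic check, and no regularity of $M$ or $m$ is used.
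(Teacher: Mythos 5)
Your proof is correct. Note that the paper itself offers no proof of this statement --- it is explicitly recalled from \cite{Witkowski} --- so there is nothing internal to compare against; but your argument is the natural one and matches the mechanism the paper relies on elsewhere: the inverse map $m(z)=z/M(1+z,1-z)$ you construct is exactly formula \eqref{eq:Mto m} quoted in the proof of Corollary \ref{corr:1}, and your reduction to the slice $(1+z,1-z)$ via homogeneity and symmetry, together with the observation that the Seiffert bounds $\tfrac{z}{1+z}\le m(z)\le\tfrac{z}{1-z}$ are precisely equivalent to the mean bounds $1-z\le z/m(z)\le 1+z$, is the whole content of the correspondence. Your handling of the diagonal is also the right catch, since the displayed formula is formally $0/0$ at $x=y$ and the value $M(x,x)=x$ must be imposed (or recovered as a limit, as you note); the only cosmetic quibble is that the limit computation is superfluous once you observe the value is forced by the definition of a mean.
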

If there is no risk of ambiguity we shall skip the argument of means. For $0<x,y\leq 1/2$ and an arbitrary mean $M$ we shall denote $M(1-x,1-y)$ by $M'$.

\section{Ky Fan inequalities}
Our first theorem shows that the Ky Fan inequality is a consequence of monotonicity of the ratio of Seiffert means. 
\begin{theorem}\label{thm:1}
	Let $M,N$ be two symmetric, homogeneous means and $m,n$ be their Seiffert functions. If the function $\frac{n}{m}$ is decreasing, then for all $0<x,y\leq 1/2$ the Ky Fan inequality
	$$\frac{M(x,y)}{M(1-x,1-y)}\leq \frac{N(x,y)}{N(1-x,1-y)} $$
	holds.
\end{theorem}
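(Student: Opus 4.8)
The key is Theorem \ref{thm:repr}, which lets me rewrite each mean in terms of its Seiffert function. Write $x + y = 2s$ and $|x-y| = 2d$, so that the argument of the Seiffert functions becomes $z = d/s$. Then $M(x,y) = d/m(d/s)$. For the complementary arguments $1-x, 1-y$ the half-sum is $1 - s$ while the half-difference is still $d$, so the relevant ratio is $z' = d/(1-s)$ and $M' = M(1-x,1-y) = d/m(d/(1-s))$. Thus
$$\frac{M}{M'} = \frac{d/m(z)}{d/m(z')} = \frac{m(z')}{m(z)},$$
and similarly $\frac{N}{N'} = \frac{n(z')}{n(z)}$. The inequality to be proved, $\frac{M}{M'} \le \frac{N}{N'}$, becomes $\frac{m(z')}{m(z)} \le \frac{n(z')}{n(z)}$, equivalently (since all quantities are positive) $\frac{n(z)}{m(z)} \le \frac{n(z')}{m(z')}$.

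**Finishing via monotonicity.**

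So everything reduces to comparing $z$ and $z'$. Since $0 < x, y \le 1/2$ we have $2s = x+y \le 1$, hence $s \le 1/2 \le 1 - s$, and therefore $z = d/s \ge d/(1-s) = z'$; the inequality $z \ge z'$ is strict unless $s = 1/2$, i.e.\ $x = y = 1/2$, in which case $d = 0$ and both sides are trivially equal. Now invoke the hypothesis that $n/m$ is decreasing: from $z \ge z'$ we get $\frac{n(z)}{m(z)} \le \frac{n(z')}{m(z')}$, which is exactly what we needed. (One should also note $z, z' \in (0,1)$ so that the Seiffert functions are evaluated in their domain: $z = d/s < 1$ because $d < s$ whenever $x, y > 0$, and $z' < z < 1$.)

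**Where the work is.**

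There is no real obstacle here; the substance is entirely in the bookkeeping of the substitution. The one point that deserves care is the degenerate case $x = y$, where $d = 0$: then the formula $M = |x-y|/(2m(|x-y|/(x+y)))$ gives $0/0$ and must be read as the limiting value $M(x,x) = x$, and likewise $M' = 1 - x$, so that $M/M' = x/(1-x) = N/N'$ and the claimed inequality holds with equality. The other mild subtlety is confirming $z \le 1$ strictly away from this degenerate case so that the Seiffert functions are well-defined; this follows from $M$ being a mean (so $M(x,y) > 0$). With these checks the proof is a two-line consequence of Theorem \ref{thm:repr} and the monotonicity assumption.
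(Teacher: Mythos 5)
Your proof is correct and takes essentially the same route as the paper's: both apply Theorem \ref{thm:repr} to reduce the claim to comparing $\frac{n}{m}$ at the two points $\frac{|x-y|}{x+y}\ge\frac{|x-y|}{2-x-y}$ and then invoke the monotonicity hypothesis. Your additional attention to the degenerate case $x=y$ and to verifying that the arguments lie in $(0,1)$ is a welcome but inessential refinement of the same argument.
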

\begin{proof}
	Denote $x'=1-x, y'=1-y$. Clearly $|x-y|=|x'-y'|$ and  $x+y\leq x'+y'$, thus $\frac{|x-y|}{x+y}\geq\frac{|x'-y'|}{x'+y'}$. By Theorem \ref{thm:repr} we obtain
	$$\frac{M(x,y)}{N(x,y)}=\frac{n}{m}\left(\frac{|x-y|}{x+y}\right)<\frac{n}{m}\left(\frac{|x'-y'|}{x'+y'}\right)=\frac{M(x',y')}{N(x',y')},$$
	which concludes the proof.
\end{proof}
Sometimes it is more convenient to investigate the ratio of means instead of their Seiffert functions.
\begin{corollary}\label{corr:1}
	In the assumptions of Theorem \ref{thm:1} if the function $q(t)=\frac{M(1,t)}{N(1,t)}$ increases in $(0,1)$, then for all $0<x,y\leq 1/2$ the Ky Fan inequality
	$$\frac{M(x,y)}{M(1-x,1-y)}\leq \frac{N(x,y)}{N(1-x,1-y)} $$
	holds.
\end{corollary}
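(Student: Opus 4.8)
The plan is to observe that the hypothesis ``$q$ is increasing'' is merely a restatement, via an order-reversing change of variables, of the hypothesis ``$n/m$ is decreasing'' that appears in Theorem~\ref{thm:1}; once this is established, the Ky Fan inequality is immediate from that theorem.

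First I would express $q$ through the Seiffert functions. Fix $t\in(0,1)$ and apply Theorem~\ref{thm:repr} to the pair $(x,y)=(t,1)$. Writing $z=\frac{|t-1|}{t+1}=\frac{1-t}{1+t}\in(0,1)$, one has $t=\frac{1-z}{1+z}$ and $1-t=\frac{2z}{1+z}$, so Theorem~\ref{thm:repr} gives
\[
M(1,t)=M(t,1)=\frac{1-t}{2m(z)}=\frac{z}{(1+z)\,m(z)},
\]
and the identical computation with $n$ in place of $m$ yields $N(1,t)=\frac{z}{(1+z)\,n(z)}$.

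Dividing, the common factor $\frac{z}{1+z}$ cancels and we obtain
\[
q(t)=\frac{M(1,t)}{N(1,t)}=\frac{n(z)}{m(z)}=\frac{n}{m}\!\left(\frac{1-t}{1+t}\right).
\]
The map $t\mapsto z=\frac{1-t}{1+t}$ is a decreasing bijection of $(0,1)$ onto itself, so $q$ is increasing on $(0,1)$ precisely when $\frac{n}{m}$ is decreasing on $(0,1)$. Hence the hypothesis of the corollary coincides with the hypothesis of Theorem~\ref{thm:1}, and the claimed inequality follows at once from that theorem.

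I do not expect a genuine obstacle here; the only point requiring care is the bookkeeping of the substitution — using $|t-1|=1-t$ for $t\in(0,1)$, and noting that $t\mapsto z$ reverses order, so that monotone increasing in $t$ corresponds to monotone decreasing in $z$. (One could also reach the identity $q(t)=(n/m)(z)$ directly from homogeneity, writing $M(1,t)=\tfrac1y M(x,y)$ for any $x,y>0$ with $x/y=t$, but passing through Theorem~\ref{thm:repr} is the cleanest route.)
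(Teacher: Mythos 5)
Your proof is correct and follows essentially the same route as the paper: both express $q(t)$ as $\frac{n}{m}$ evaluated at the order-reversing substitution $z=\frac{1-t}{1+t}$ (the paper via the formula $m(z)=\frac{z}{M(1+z,1-z)}$, you via Theorem~\ref{thm:repr} applied to $(t,1)$, which is the same identity), and then invoke Theorem~\ref{thm:1}. The bookkeeping in your substitution checks out.
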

\begin{proof}
	Let us recall the formula connecting the mean and its Seiffert function (\cite{Witkowski})
	\begin{equation}
	m(z)=\frac{z}{M(1+z,1-z)}.
	\label{eq:Mto m}
	\end{equation}
	This gives:
	$$\frac{n}{m}(z)=\frac{M(1+z,1-z)}{N(1+z,1-z)}=\frac{M\left(1,\frac{1-z}{1+z}\right)}{N\left(1,\frac{1-z}{1+z}\right)}=\frac{M(1,s)}{N(1,s)}=q(s).$$
	Here $s=\frac{1-z}{1+z}$ decreases from $1$ to $0$ as $z$ travels in the opposite direction, so $n/m$ decreases if and only if $q$ increases.
\end{proof}
 
Let us illustrate our results with some examples:
\begin{example}
	This result was proved by Chan, Goldberg and Gonek in \cite{CGG1974}: if 
	$$\mathsf{A}_r(x,y)=\begin{cases}
		 \left(\frac{x^r+y^r}{2}\right)^{1/r}	 & r\neq 0,\\
		\sqrt{xy} & r=0
	\end{cases}$$
	is a power mean of order $r$, then for $r<s$ holds
	\begin{equation}
	\frac{\mathsf{A}_r}{\mathsf{A}_r'}\leq\frac{\mathsf{A}_s}{\mathsf{A}_s'}.
	\label{neq:ar<as}
	\end{equation}
	
	Indeed, for $0<t<1$ and $rs\neq 0$
	$$\sgn\frac{d}{dt}\log\frac{\mathsf{A}_r(1,t)}{\mathsf{A}_s(1,t)}=\sgn\frac{t^r-t^s}{t(t^r+1)(t^s+1)}=\sgn(s-r),$$
	else if $s=0$, then
		$$\sgn \frac{d}{dt}\log\frac{\mathsf{A}_r(1,t)}{\mathsf{A}_0(1,t)}=\sgn \frac{t^r-1}{t(2t^r+1)}=-\sgn r,$$
	so the inequality \eqref{neq:ar<as} is true by Corollary \ref{corr:1}.
\end{example}
\begin{example}
	The last term in the chain of inequalities \eqref{eq:chain1} contains the second Seiffert mean
	$$\mathsf{T}(x,y)=\frac{|x-y|}{2\arctan\frac{|x-y|}{x+y}}.$$
	Let $\mathsf{Q}(x,y)=\sqrt{\frac{x^2+y^2}{2}}$ be the quadratic (or root mean square). \\We have $\mathsf{t}(z)=\arctan z$ and $\mathsf{q}(z)=\frac{z}{\sqrt{1+z^2}}$ and
	
		$$\frac{d}{dz}\frac{\mathsf{t}}{\mathsf{q}}(z)=\frac{z-\arctan z}{z^2\sqrt{1+z^2}}>0,$$
so by Theorem \ref{thm:1} we can extend the chain \eqref{eq:chain1}
$$\frac{\mathsf{T}}{\mathsf{T'}}<\frac{\mathsf{Q}}{\mathsf{Q'}}.$$
\end{example}
\begin{example}
	Consider now the Heronian mean
	$$\mathsf{He}(x,y)=\frac{x+\sqrt{xy}+y}{3}.$$
	For $0<t<1$ we have 
	$$q(t)=\frac{\mathsf{He}(1,t)}{\mathsf{A}_{2/3}(1,t)}=\frac{2\sqrt2}{3}\frac{t+\sqrt{t}+1}{(t^{2/3}+1)^{3/2}}$$
	and
	$$\frac{dq}{dt}(t)=\frac{\sqrt{2}}{3}\frac{(1-t^{1/6})^3(1+t^{1/6})}{(1+t^{2/3})^{5/2}t^{1/2}}>0.$$
	We also see that the quotient
	\begin{align*}
	\frac{\mathsf{He}(1,t)}{\mathsf{A}_{1/2}(1,t)}	& =\frac{4}{3}\frac{t+\sqrt{t}+1}{t+2\sqrt{t}+1}=\frac{4}{3}\left[1-\frac{\sqrt{t}}{t+2\sqrt{t}+1}\right]\\%
	&=\frac{4}{3}\left[1-\frac{1}{\sqrt{t}+2+\frac{1}{\sqrt{t}}}\right]
	\end{align*}
	decreases in $(0,1)$.
	Therefore by Corollary \ref{corr:1} we have 
	$$\frac{\mathsf{A}_{1/2}}{\mathsf{A}_{1/2}'}<\frac{\mathsf{He}}{\mathsf{He}'}<\frac{\mathsf{A}_{2/3}}{\mathsf{A}_{2/3}'}.$$
\end{example}
\begin{example}
	We know (\cite{Lin1974F}) that the logarithmic mean $\mathsf{L}(x,y)=\frac{x-y}{\log x-\log y}$ satisfies the inequality $\mathsf{L}<\mathsf{A}_{1/3}$. Consider
	$$q(t)=\frac{\mathsf{A}_{1/3}(1,t)}{\mathsf{L}(1,t)}=\frac{(t^{1/3}+1)^3}{8(t-1)}\log t.$$
	Its derivative equals
	$$\frac{dq}{dt}(t)=\frac{(t^{1/3} + 1)^2 [t^{4/3} + t - t^{1/3} - 1- (t + t^{1/3}) \log t ]}{8 t(t - 1)^2 }.$$
	To evaluate the sign of the expression in square brackets substitute $t=s^3$ and calculate its Taylor series at $s=1$
	\begin{align*}
		t^{4/3} + t &- t^{1/3}- 1 - (t + t^{1/3}) \log t =s^4+s^3 -s-1-3(s^2+1)s\log s\\%
		&=-\sum_{n=5}^\infty \frac{n^2-5n+12}{n(n-1)(n-2)(n-3)}(1-s)^n<0,
	\end{align*}
	which proves the inequality
	$$\frac{\mathsf{L}}{\mathsf{L}'}<\frac{\mathsf{A}_{1/3}}{\mathsf{A}_{1/3}'}.$$
\end{example}
\begin{example}There is another mean similar to the Seiffert means that lies between the arithmetic and the first Seiffert mean:

$$\mathsf{P}(x,y)=\frac{|x-y|}{2\arcsin\left(\frac{|x-y|}{x+y}\right)} \leq \frac{|x-y|}{2\sinh\left(\frac{|x-y|}{x+y}\right)}=\mathsf{S}_{\sinh}(x,y)\leq \frac{x+y}{2}=\mathsf{A}(x,y).$$

Their respective Seiffert functions are $\arcsin, \sinh$ and $\mathrm{id}$.
The hyperbolic sine is convex for positive arguments, therefore $\frac{\sinh z}{z}$ increases as a divided difference of a convex function. So by Theorem \ref{thm:1} we conclude
$$\frac{\mathsf{S}_{\sinh}}{\mathsf{S}_{\sinh}'}\leq \frac{\mathsf{A}}{\mathsf{A}'}.$$
\end{example}
\begin{example}
	It is known \cite{Jag1994F} that $\mathsf{A}_{1/2}<\mathsf{P}$. We have $\mathsf{a}_{1/2}(z)=\frac{2z}{1+\sqrt{1-z^2}}$. To check the monotonicity of $\mathsf{p}/\mathsf{a}_{1/2}$ we substitute $z=\sin t$ to get $\frac{t}{2\sin t}(1+\cos t)$. Its derivative $\frac{t-\sin t}{2\cos t-2}$ is negative, so
	$$\frac{\mathsf{A}_{1/2}}{\mathsf{A}_{1/2}'}<\frac{\mathsf{P}}{\mathsf{P}'}.$$
\end{example}
\begin{example}
	The tangent is also a Seiffert function. It satisfies the inequality $\tan z<\artanh z$ for $0<z<1$, so their means satisfy
	$$\mathsf{L}(x,y)=\frac{|x-y|}{2\artanh\left(\frac{|x-y|}{x+y}\right)}<\frac{|x-y|}{2\tan\left(\frac{|x-y|}{x+y}\right)}=\mathsf{S}_{\tan}(x,y).$$
	Let us investigate the quotient of the Seiffert functions:
	$$\frac{d}{dz}\frac{\artanh z}{\tan z}=\frac{\frac{1}{2}\sin 2z-(1-z^2)\artanh z}{(1-z^2)\sin^2z}$$
	and using Taylor expansion we obtain
	
	\begin{align*}
	\frac{1}{2}\sin 2z-&(1-z^2)\artanh z	 =\frac{1}{2}\sum_{n=0}^\infty \frac{(-1)^n (2z)^{2n+1}}{(2n+1)!}-(1-z^2)\sum_{n=0}^\infty \frac{ z^{2n+1}}{2n+1}\\%
	&=\sum_{n=1}^\infty\frac{1}{(2n+1)(2n-1)}\left[2+\frac{(-1)^n 4^n(2n-1)}{(2n)!}\right]z^{2n+1}>0,
\end{align*}
because all the coefficients in square brackets are nonnegative. Thus by Theorem \ref{thm:1}
$$\frac{\mathsf{L}}{\mathsf{L}'}<\frac{\mathsf{S}_{\tan}}{\mathsf{S}_{\tan}'}.$$
\end{example}

\section{Harmonic Ky Fan inequalities}
In  \cite{NS2006} the autors derive the inequalities of type
$$\frac{1}{\mathsf{A}}-\frac{1}{\mathsf{A}'}<\frac{1}{\mathsf{P}}-\frac{1}{\mathsf{P}'}   $$
from the inequalities $\mathsf{P}/\mathsf{P}'<\mathsf{A}/\mathsf{A}'$ using some elementary algebraic transformation. Here we show an approach that bases on the concept of Seiffert functions.

Suppose $M$ ans $N$ are two symmetric and homogeneous means. Using Theorem \ref{thm:repr} the inequality
$$\frac{1}{M}-\frac{1}{M'}\leq \frac{1}{N}-\frac{1}{N'}$$
can be written as
$$m\left(\frac{|x-y|}{x+y}\right)-n\left(\frac{|x-y|}{x+y}\right)\leq m\left(\frac{|x-y|}{2-x-y}\right)-n\left(\frac{|x-y|}{2-x-y}\right).$$
Since $2-x-y>x+y$ for $0<x,y<1/2$ we can formulate the following result.
\begin{theorem}\label{thm:2}
	Suppose the means $M$ and $N$ are symmetric and homogeneous and $m$ and $n$ are their Seiffert means. If the function $m-n$ decreases, then the Ky Fan inequality
	$$\frac{1}{M}-\frac{1}{M'}\leq \frac{1}{N}-\frac{1}{N'}$$
	holds.
\end{theorem}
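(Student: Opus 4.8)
The plan is to translate the harmonic Ky Fan inequality into a statement purely about the Seiffert functions $m$ and $n$, exactly as the remark preceding the theorem suggests, and then invoke the monotonicity hypothesis. First I would set $x'=1-x$, $y'=1-y$ and rewrite the representation of Theorem~\ref{thm:repr} in the reciprocal form $\frac{1}{M(x,y)}=\frac{2}{|x-y|}\,m\!\left(\frac{|x-y|}{x+y}\right)$. Since $|x'-y'|=|x-y|$ and $x'+y'=2-x-y$, subtracting gives
$$\frac{1}{M}-\frac{1}{M'}=\frac{2}{|x-y|}\left[m\!\left(\frac{|x-y|}{x+y}\right)-m\!\left(\frac{|x-y|}{2-x-y}\right)\right],$$
and likewise for $N$. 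The case $x=y$ is trivial, since both sides vanish, so I may assume $x\neq y$ and in particular $|x-y|>0$.

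Next I would cancel the common positive factor $\frac{2}{|x-y|}$ and rearrange the terms, so that the claimed inequality becomes equivalent to
$$(m-n)\!\left(\frac{|x-y|}{x+y}\right)\leq (m-n)\!\left(\frac{|x-y|}{2-x-y}\right).$$
At this point two small verifications are needed: that both arguments lie in the domain $(0,1)$ of a Seiffert function, and that the left argument dominates the right one. For the domain, $\frac{|x-y|}{x+y}<1$ because $|x-y|<x+y$ for positive $x,y$, while $\frac{|x-y|}{2-x-y}<1$ because $|x-y|<2-x-y$ is equivalent to $\max(x,y)<1$, which holds since $x,y\leq 1/2$. For the ordering, from $0<x,y\leq 1/2$ we get $x+y\leq 1\leq 2-x-y$, and hence $\frac{|x-y|}{x+y}\geq\frac{|x-y|}{2-x-y}$.

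Finally, since $m-n$ is assumed decreasing on $(0,1)$, evaluating it at the larger of the two arguments produces the smaller value, which is precisely the displayed inequality; reversing the algebraic steps then yields $\frac{1}{M}-\frac{1}{M'}\leq\frac{1}{N}-\frac{1}{N'}$. There is no genuine obstacle here: the only point that deserves a moment's attention is checking that $\frac{|x-y|}{2-x-y}$ stays below $1$, i.e.\ that the reflected pair $1-x,1-y$ still yields a legitimate Seiffert-function input, and this is where the hypothesis $x,y\leq 1/2$ is used a second time.
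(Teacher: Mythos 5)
Your proposal is correct and follows exactly the paper's route: the paper also rewrites $\frac{1}{M}-\frac{1}{M'}$ via the Seiffert representation as $\frac{2}{|x-y|}\bigl[m\bigl(\frac{|x-y|}{x+y}\bigr)-m\bigl(\frac{|x-y|}{2-x-y}\bigr)\bigr]$, cancels the common factor, and uses $2-x-y>x+y$ together with the monotonicity of $m-n$. Your version merely adds the (harmless but welcome) verifications of the $x=y$ case and of the arguments lying in $(0,1)$, which the paper leaves implicit.
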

Taking into account the formula \eqref{eq:Mto m} and setting $s=(1+z)/(1-z)$ we get the following
\begin{corollary}
	In the assumptions of Theorem \ref{thm:2} if the function 
	$$g(s)=(s-1)\left(\frac{1}{M(s,1)}-\frac{1}{N(s,1)}\right)$$
	descreases for $s>1$, then the inequality
	$$\frac{1}{M}-\frac{1}{M'}\leq \frac{1}{N}-\frac{1}{N'}$$
	holds.
\end{corollary}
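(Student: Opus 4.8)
The plan is to express the difference $m-n$ directly in terms of the function $g$ through the substitution $s=\frac{1+z}{1-z}$, and then to invoke Theorem \ref{thm:2}.

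First I would start from formula \eqref{eq:Mto m}, which gives
$$m(z)-n(z)=z\left(\frac{1}{M(1+z,1-z)}-\frac{1}{N(1+z,1-z)}\right).$$
Next, using homogeneity to rewrite $M(1+z,1-z)=(1-z)M\!\left(\tfrac{1+z}{1-z},1\right)=(1-z)M(s,1)$ and similarly for $N$, this becomes
$$m(z)-n(z)=\frac{z}{1-z}\left(\frac{1}{M(s,1)}-\frac{1}{N(s,1)}\right).$$
Then I would note that $s-1=\frac{2z}{1-z}$, so that $\frac{z}{1-z}=\frac{s-1}{2}$, and hence
$$m(z)-n(z)=\tfrac12(s-1)\left(\frac{1}{M(s,1)}-\frac{1}{N(s,1)}\right)=\tfrac12\,g(s).$$

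Finally, observe that $z\mapsto s=\frac{1+z}{1-z}$ is an increasing bijection of $(0,1)$ onto $(1,\infty)$, so that $m-n$ is decreasing on $(0,1)$ if and only if $g$ is decreasing on $(1,\infty)$. Thus the hypothesis that $g$ decreases implies that $m-n$ decreases, and Theorem \ref{thm:2} then yields the stated inequality. The argument is essentially a change of variables, so I do not expect a genuine obstacle; the only points needing a little care are verifying that the substitution carries the relevant interval correctly onto $(1,\infty)$ and that composition with the increasing map $z\mapsto s$ preserves monotonicity rather than reversing it — both of which are immediate.
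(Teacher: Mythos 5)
Your argument is correct and is exactly the computation the paper intends: it only sketches the proof by pointing to formula \eqref{eq:Mto m} and the substitution $s=(1+z)/(1-z)$, and your identity $m(z)-n(z)=\tfrac12 g(s)$ together with the monotonicity of $z\mapsto s$ fills in precisely those details before invoking Theorem \ref{thm:2}.
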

\begin{example}
Consider the chain of inequalities between Seiffert functions (see \cite[Lemma 3.1]{Witkowski}) valid for $0<z<1$
$$z>\arsinh z>\sin z>\arctan z>\tanh z.$$
The mean corresponding to the second function is called Neuman-S\'andor mean --- $\mathsf{NS}$ and the fourth one generates the second Seiffert mean --- $\mathsf{T}$.

	Since $\cosh^2z-1=(\cosh z+1)(\cosh z-1)>2\cdot \frac{z^2}{2}$
	$$\frac{d}{dz}(\arctan z-\tanh z)=\frac{\cosh^2z-1-z^2}{(1+z^2)\cosh^2z}>0.$$
	Inequality $\cos z>1-z^2/2$ leads to
	$$ \frac{d}{dz}(\sin z-\arctan z)=\frac{(1+z^2)\cos z-1}{1+z^2}>\frac{(1+z^2)(1-z^2/2)-1}{1+z^2}=\frac{z^2(1-z^2)}{2(1+z^2)}>0.$$
	On the other hand $\cos z<1-z^2/2+z^4/6$ , so
\begin{align*}
\frac{d}{dz}(\arsinh z-\sin z)	& =\frac{1-\sqrt{1+z^2}\cos z}{\sqrt{1+z^2}}>\frac{1-\sqrt{(1+z^2)\left(1-\frac{z^2}{2}+\frac{z^4}{6}\right)^2}}{\sqrt{1+z^2}}\\%
&=\frac{1-\sqrt{1-\frac{5}{12}z^4(1-z^2)-\frac{1}{36}z^8(5-z^2)}}{\sqrt{1+z^2}}>0.
\end{align*}
And finally
	$$\frac{d}{dz}(z-\arsinh z)=1-\frac{1}{\sqrt{1+z^2}}>0.$$
	Therefore by Theorem \ref{thm:2}
		$$\frac{1}{\mathsf{S}_{\tanh}}-\frac{1}{\mathsf{S}_{\tanh}'}\leq \frac{1}{\mathsf{T}}-\frac{1}{\mathsf{T}'}\leq \frac{1}{\mathsf{S}_{\sin}}-\frac{1}{\mathsf{S}_{\sin}'}\leq \frac{1}{\mathsf{NS}}-\frac{1}{\mathsf{NS}'}\leq \frac{1}{\mathsf{A}}-\frac{1}{\mathsf{A}'}.$$
\end{example}
\begin{example}
	On the other side of the arithmetic mean there are two chains of inequalities for Seiffert means involving sine and tangent (see \cite[Lemma 3.2]{Witkowski}):
	\begin{equation}
	z<\sinh z<\genfrac{\{}{\}}{0pt}{}{\tan z}{\arcsin z}<\artanh z.
	\label{neq:chain1}
	\end{equation}
\end{example}
The two Seiffert functions in curly brackets are not comparable, arcsine defines the first Seiffert mean and inverse hyperbolic tangent corresponds to the logarithmic mean.

The difference $\sinh z-z$ increases, because this is the gap between a convex function and its supporting line.

To show that 
$\frac{d}{dz}(\tan z-\sinh z)=\frac{1-\cos^2z\cosh z}{\cos^2z}$ is positive note that $$1-\cos^2z\cosh z>1-\cos z\cosh z=:f(z).$$
The function  $f$ vanishes at $z=0$ and
$$f'(z)=\cos z\cosh z(\tan z-\tanh z)>0,$$
because $\tan z>z>\tanh z$. Thus $f'$ is positive, and so is $f$.

The difference between inverse hyperbolic tangent and tangent also increases, since
$$\frac{d}{dz}(\artanh z-\tan z)'=\frac{1}{1-z^2}-\frac{1}{\cos^2 z}=\frac{z^2-\sin^2z}{(1-z^2)\cos^2z}>0.$$

To deal with the lower chain of inequalities \eqref{neq:chain1} note that
\begin{align}\label{neq:cosh}
\cosh z=&1+\frac{z^2}{2!}+\frac{z^4}{4!}+\frac{z^6}{6!}+\dots\\\notag
<&1+\frac{z^2}{2}+\frac{z^4}{12}\left(\frac{1}{2}+\frac{1}{4}+\dots\right)=1+\frac{z^2}{2}+\frac{z^4}{12}.
\end{align}

Now
$$\frac{d}{dz}(\arcsin z-\sinh z)=\frac{1-\sqrt{1-z^2}\cosh z}{\sqrt{1-z^2}}>0$$
because using \eqref{neq:cosh}
\begin{align*}
	1-\sqrt{1-z^2}\cosh z&>1-\sqrt{(1-z^2)\left(1+\frac{z^2}{2}+\frac{z^4}{12}\right)^2} \\%
	&=1-\sqrt{1-\frac{7z^4}{12}-\frac{z^6}{3}-\frac{11z^8}{144}-\frac{z^{10}}{144}}>0.
\end{align*}

Comparison of the last pair is simple
$$\frac{d}{dz}(\artanh z-\arcsin z)=\frac{1}{1-z^2}-\frac{1}{\sqrt{1-z^2}}>0.$$

Now we can use Theorem \ref{thm:2} to write the chain of inequalities

$$\frac{1}{\mathsf{A}}-\frac{1}{\mathsf{A}'}<\frac{1}{\mathsf{S}_{\sinh}}-\frac{1}{\mathsf{S}_{\sinh}'}<
\genfrac{\{}{\}}{0pt}{}{\frac{1}{\mathsf{S}_{\tan}}-\frac{1}{\mathsf{S}_{\tan}'}}{\frac{1}{\mathsf{P}}-\frac{1}{\mathsf{P}'}}
<\frac{1}{\mathsf{L}}-\frac{1}{\mathsf{L}'}.$$

NOTE: In the proof of Theorems \ref{thm:1}  we use monotonicity of $\frac{m}{n}$ to obtain inequality between $\frac{m}{n}\left(\frac{y-x}{x+y}\right)$ and $\frac{m}{n}\left(\frac{y-x}{2-x-y}\right)$. It is natural to ask  whether the inverse is true, i.e. if for all $0<x<y<1/2$ the inequality
\begin{equation}
f\left(\frac{y-x}{x+y}\right)>f\left(\frac{y-x}{2-x-y}\right)
\label{neq:conjecture}
\end{equation}
holds then $f$ needs to be monotone. The counterexample has been produced  by \textit{tometomek91} - user of the mathematical portal \texttt{matematyka.pl}. \\
It is clear that $\frac{y-x}{x+y}>\frac{y-x}{2-x-y}$. Moreover $\sup_{0<x<y<1/2}\frac{y-x}{2-x-y}=\frac{1}{3}$. Therefore every function $f$ satisfying the two conditions:
\begin{itemize}
	\item $f$ is increasing on $(0,1/3)$
	\item $f(x)\geq \lim_{t\nearrow{1/3}}f(t)$ for $x\in [1/3,1]$
\end{itemize}
satisfies \eqref{neq:conjecture}.

\section{Acknowledgement} The author wishes to thank Janusz Matkowski and Monika Nowicka for improvement suggestions and efficient bug and typo hunting.

\section{References}
%

\end{document}